\title{\textbf{Motifs in Derived Algebraic Geometry}}
\author{Renaud Gauthier \footnote{rg.mathematics@gmail.com} \\ \\}
\theoremstyle{definition}
\newtheorem{Thm}{Theorem}[section]
\newtheorem{sAopRKlemma}{Lemma}[subsubsection]
\newtheorem{sAopRKCor}[sAopRKlemma]{Corollary}
\newtheorem{hfpequ}[sAopRKlemma]{Lemma}
\newtheorem{equforcs}[sAopRKlemma]{Lemma}
\newtheorem{ffqczero}[sAopRKlemma]{Lemma}
\newtheorem{cscommutes}[sAopRKlemma]{Lemma}
\newtheorem{ccXh}[sAopRKlemma]{Lemma}
\DeclareMathOperator*{\rrarrop}{\rightrightarrows}
\DeclareMathOperator*{\tensprod}{\otimes}
\newcommand{\beq}{\begin{equation}}
\newcommand{\eeq}{\end{equation}}
\newcommand{\rarr}{\rightarrow}
\newcommand{\larr}{\leftarrow}
\newcommand{\Rarr}{\Rightarrow}
\newcommand{\Ob}{\text{Ob\,}}
\newcommand{\xrarr}{\xrightarrow}
\newcommand{\cA}{\mathcal{A}}
\newcommand{\cC}{\mathcal{C}}
\newcommand{\bL}{\mathbb{L}}
\newcommand{\bR}{\mathbb{R}}
\newcommand{\Cat}{\text{Cat}}
\newcommand{\Hom}{\text{Hom}}
\newcommand{\Ho}{\text{Ho}\,}
\newcommand{\Map}{\text{Map}}
\newcommand{\Mod}{\text{Mod}}
\newcommand{\op}{\text{op}}
\newcommand{\Set}{\text{Set}}
\newcommand{\Spec}{\text{Spec\,}}
\newcommand{\AffC}{\cA \text{ff}_{\cC}}
\newcommand{\AopRD}{A^{\op, \RD}}
\newcommand{\AopRbD}{A^{\op, \mathbb{R} \partial \Delta^n}}
\newcommand{\BopRD}{B_*^{\op, \RD}}
\newcommand{\BopRbD}{B_*^{\op, \mathbb{R} \partial \Delta^n}}
\newcommand{\bfA}{\mathbf{A}}
\newcommand{\bfB}{\mathbf{B}}
\newcommand{\bfuA}{\mathbf{\underline{A}}}
\newcommand{\bfuB}{\mathbf{\underline{B}}}
\newcommand{\Comm}{\text{Comm}}
\newcommand{\CommC}{Comm(\cC)}
\newcommand{\cskCAlg}{\text{csk-CAlg}}
\newcommand{\ccAopRbD}{cc_*(\AopRbD)}
\newcommand{\csBop}{(cs_*(B_*))^{\op}}
\newcommand{\csBopuD}{(cs_*(B_*))^{\op, \uD}}
\newcommand{\csBopD}{(cs_*(B_*))^{\op, \Delta^n}}
\newcommand{\csBopRD}{(cs_*(B_*))^{\op, \RD}}
\newcommand{\csBopRbD}{(cs_*(B_*))^{\op, \mathbb{R} \partial \Delta^n}}
\newcommand{\csAop}{(cs_*A)^{\op}}
\newcommand{\csAopRD}{(cs_*A)^{\op, \RD}}
\newcommand{\csAopRbD}{(cs_*A)^{\op, \mathbb{R} \partial \Delta^n}}
\newcommand{\ccBopRD}{cc_*(\BopRD)}
\newcommand{\ccBopRbD}{cc_*(\BopRbD)}
\newcommand{\ccAopRD}{cc_*(\AopRD)}
\newcommand{\csAopRK}{(cs_*A)^{\op, \RK}}
\newcommand{\csAopRuK}{(cs_*A)^{\op, \RuK}}
\newcommand{\dkAff}{\text{d}k\text{-Aff}}
\newcommand{\dSt}{\text{dSt}}
\newcommand{\Dop}{\Delta^{\op}}
\newcommand{\del}{\partial}
\newcommand{\ettop}{\acute{e}\text{t}}
\newcommand{\Gpd}{\text{Gpd}}
\newcommand{\holim}{\text{holim}}
\newcommand{\kCAlg}{k\text{-CAlg}}
\newcommand{\kMod}{k\text{-Mod}}
\newcommand{\kAff}{\text{k-Aff}}
\newcommand{\RuD}{\mathbb{R} \underline{\Delta}^n}
\newcommand{\RbD}{\mathbb{R} \partial \Delta^n}
\newcommand{\RD}{\mathbb{R} \Delta^n}
\newcommand{\RK}{\mathbb{R}K}
\newcommand{\RuK}{\mathbb{R} \underline{K}}
\newcommand{\Sh}{\text{Sh}}
\newcommand{\SetD}{\Set_{\Delta}}
\newcommand{\skMod}{\text{s}k\text{-Mod}}
\newcommand{\skCAlg}{\text{s}k\text{-CAlg}}
\newcommand{\skCAlgop}{\skCAlg^{\op}}
\newcommand{\skAff}{\text{s}k\text{-Aff}}
\newcommand{\sdkAff}{\text{sd}k\text{-Aff}}
\newcommand{\sAopRubD}{(sA)^{\op, \mathbb{R} \ubD}}
\newcommand{\SpsB}{\Spec s(B_*)}
\newcommand{\SpsA}{\Spec s(A)}
\newcommand{\sBop}{(s(B_*))^{\op}}
\newcommand{\sBopuD}{(s(B_*))^{\op, \uD}}
\newcommand{\sBopD}{(s(B_*))^{\op, \Delta^n}}
\newcommand{\sBopRD}{(s(B_*))^{\op, \RD}}
\newcommand{\sBopRbD}{(s(B_*))^{\op, \mathbb{R} \partial \Delta^n}}
\newcommand{\sAop}{(sA)^{\op}}
\newcommand{\sAopRuD}{(sA)^{\op, \RuD}}
\newcommand{\sAopRD}{(sA)^{\op, \RD}}
\newcommand{\sAopRbD}{(sA)^{\op, \mathbb{R} \partial \Delta^n}}
\newcommand{\sAopRK}{(sA)^{\op, \RK}}
\newcommand{\sAopRuK}{(sA)^{\op, \RuK}}
\newcommand{\uA}{\underline{A}}
\newcommand{\uD}{\underline{\Delta}^n}
\newcommand{\ubD}{\underline{\partial \Delta}^n}
\newcommand{\uK}{\underline{K}}
\begin{document}
\maketitle
\begin{abstract}
We formalize the concept of sheaves of sets on a model site by considering variables thereof, or motifs, and we construct functorially defined derived algebraic stacks from them, thereby eliminating the necessity to choose derived extensions as explained in \cite{TV6}.
\end{abstract}

\newpage

\section{Introduction}
It has been observed in the past (\cite{D1}, \cite{D2}, \cite{TV2}) that one can formalize Algebraic Geometry by writing it in purely categorical terms by simply starting with some symmetric monoidal base category $\cC$ and by considering its category $\CommC$ of commutative and unital monoids, and letting $\AffC = (\CommC)^{\op}$ be the category of affine schemes over $\cC$. On $\AffC$ one then puts some topology $\tau$, be it the Zariski, \'etale, ffqc or any topology that one so wishes to then develop a notion of stacks and higher stacks on $(\AffC, \tau)$. One would obtain in this manner what is referred to as Relative Algebraic Geometry, classical Algebraic Geometry corresponding to the case of having $\cC= (\mathbb{Z}-\Mod, \otimes)$. It is in an attempt to develop Relative Algebraic Geometry over symmetric monoidal $\infty$-categories that Toen observed that one could use model categories since they give rise to $\infty$-categories via the Dwyer-Kan simplicial localization technique. Thus one starts with a symmetric monoidal model category $(\cC, \otimes)$ in the sense of \cite{Ho}, thereby introducing Homotopical Algebraic Geometry (\cite{T2},\cite{TV2}, \cite{TV}, \cite{TV4}), or Algebraic Geometry over model categories. In particular if for some fixed commutative ring $k$ one considers $\cC = \skMod$, the category of simplicial objects in $\kMod$, one obtains Derived Algebraic Geometry (\cite{TV4}).\\

The need to introduce stacks, higher stacks and derived stacks can be seen from a classification problem perspective. As recounted in \cite{T}, one can start with a contravariant functor F from a category $\cC$ of geometric objects to $\Set$, where for $X$ a geometric object, $F(X)$ classifies families of objects parametrized by $X$, and $F$ being valued in $\Set$ this classification is really done up to equality. One may relax that condition and ask that classification be done up to isomorphism as well, whence the introduction of contravariant functors into $\Gpd$, which corresponds to considering 1-stacks. Another motivation for making such a generalization as pointed out in \cite{TV6} is that such $\Set$-valued moduli functors $F$ may not be representable and only admit a coarse moduli space, and not of the expected dimension, so following \cite{Ko}, a natural approach amounts to seeing such spaces as truncations of higher spaces, or derived spaces, smooth (as opposed to being singular), and of the expected dimension. This corresponds to seeking natural extensions of $F$ to functors $F_1:\cC^{\op} \rarr \Gpd$ that make the following diagram commutative, where we take $\cC = \kCAlg = \Comm(\kMod)$, $k$ a commutative ring, since that will be our main point of interest:
\beq
\xymatrix{
	\kCAlg \ar[dr]_{F_1} \ar[r]^F & \Set \\
	& \Gpd \ar[u]_{\pi_0}
} \nonumber
\eeq
Next one may want to further relax the classification scheme by also allowing classification up to equivalence. At an elementary level this means having a $\Cat$-valued functor, but morphisms in $\Cat$ would have to be inverted, and this is possible only if we have functors valued in $\infty$-categories, hence we consider $\infty$-stacks, or stacks for short. This corresponds to looking at extensions $F_{\infty}$ of $F_1$ and $F$ that make the following diagram commutative:
\beq
\xymatrix{
	\kCAlg \ar[r]^-F \ar[dr]^{F_1} \ar[ddr]_{F_{\infty}} & \Set \\
& \Gpd \ar[u]_{\pi_0} \\
& \SetD \ar[u]_{\Pi_1}
} \nonumber
\eeq
Finally, if one considers obstruction theory, as pointed out in \cite{T} Derived Algebraic Geometry is a natural formalism in which such a theory can be written out, and this corresponds to finding derived extensions $\mathbb{R}F$ of $F$, $F_1$ and $F_{\infty}$ that make the following diagram commutative:

\beq
\xymatrix{
	\kCAlg \ar[dd]_i \ar[r]^-F \ar[dr]^{F_1} \ar[ddr]_{F_{\infty}} & \Set \\
& \Gpd \ar[u]_{\pi_0} \\
\skCAlg \ar[r]_-{\bR F} & \SetD \ar[u]_{\Pi_1}
} \label{ToenCD}
\eeq

Now as discussed in \cite{TV6}, not all such extensions will work, there are constraints that have to be met in addition to having the above diagram commutative, for instance having the right derived tangent stack, something that would be known at the onset. Moreover there is no canonical choice of a derived extension. This is our main motivation for introducing a formalism where one would not have to worry about having to pick a derived extension. In addition we would like to functorially construct a derived extension, as exposed in the section that follows.

\section{Construction}
We reproduce \eqref{ToenCD} above as it is presented in \cite{T}:

\beq
\xymatrix{
	\kCAlg \ar[dd]_i \ar[r]^-{sheaves} \ar[dr]^{1-St} \ar[ddr]_{St} & \Set \\
& \Gpd \ar[u]_{\pi_0} \\
\skCAlg \ar[r]_-{\substack{ der.\\ stacks}} & \SetD \ar[u]_{\Pi_1}
} \nonumber
\eeq

where for $X \in \SetD$, $\Pi_1(X) = G \gamma(X)$, $\gamma(X)$ the path category of $X$, and $G: \Cat \rarr \Gpd$, $A \mapsto GA = A[\Sigma^{-1}]$, $\Sigma = A_1$ (\cite{JT}, \cite{GoJa}). We have $\Ob(\Pi_1(X)) = X_0$, morphisms in $\Pi_1(X) = A_1$ and formal inverses. We also have $\pi_0: \Cat \rarr \Set$ is the connected component functor on categories (\cite{McL}).\\

This diagram is made commutative by selecting derived extensions to $\skCAlg$ of the concepts of sheaves, 1-stacks and stacks, which depends very much on the context, so one may inquire whether working with motifs, or variables, instead of specializations thereof, would provide something that is less of an ad hoc construction. This is motivated in particular by the fact that mentioning ``sheaves'' and ``derived stacks'' in the above diagram can be made precise using the formalism of motifs as defined presently. We briefly remind the reader of the definition of motivic frame as introduced in \cite{RG1}: given a mathematical construct $X$, one can formalize its construction by using what is called a motivic frame $x = \{x^{(n)}, x^{(n)} \xrarr{z^{(n)}} x^{(n+1)} \}_{n \geq 0}$ with $z^{(n)}$ gluing maps and $x^{(n)}$ variables, or motifs, the idea being that $X$ is an object of a specific category, whose objects are of a certain (higher) type. By this we mean a type of object may be built from objects of a lower type, in an inductive fashion. In a first approximation, those lower type objects are collected into classes, and the $x^{(i)}$'s would provide variables, or motifs, for objects of each such class. The gluing maps indicate in what manner are the different motifs $x^{(i)}$ put together to form the motif type for $X$. The choice of the word motif is mainly one of semantics; ``variable" is a rather crude way to refer to the $x^{(i)}$'s, relative to ``motif", which is essentially a pattern, originally meant to describe a unifying, elementary model, which captures the nature of the objects within each class in addition to being a simple variable. A motif depending on other motifs will simply be referred to as a higher motif.\\

We let $\Sh$ be a motif for sheaves of sets on $\kCAlg$. Strictly speaking, we have an overarching motif $\Sh$ on a site motif $(M, \tau)$. If $M = \kCAlg$, then we specialize $\Sh$ to that particular site, so it should be denoted $\Sh[\kCAlg]$, but for notation's sake, we will just denote it by $\Sh$ again, since those are the only sheaves we consider in this work. In this situation, $\Sh$ itself is a motif, for sheaves on $\kCAlg$. Hence we are looking for a commutative diagram of motifs. Further we regard the move from sheaves to derived stacks as a functorial operation, thus we would like to replace $\pi_0 \circ \Pi_1$ on the right by an adjoint map $\Set \rarr \SetD$ denoted $j$ that we would like to argue is formally identical to $i$ on the left. This is made possible by the following observation: the connected component functor can also be defined from $\SetD$ and this is what we will use, we define it as the coequalizer (\cite{JT}):
\beq
\Pi_1(X)=X_1 \rrarrop_{d_0}^{d_1} X_0 \rarr \pi_0(X) \nonumber
\eeq
for $X \in \SetD$. Thus functorially on $\SetD$, the connected components functor corresponds to $\pi_0 \circ \Pi_1$. This gives us $\pi_0$ as $\pi_0 \dashv cs_*$, $cs_*$ the constant simplicial functor:
\begin{align}
cs_*: \Set &\rarr \SetD \nonumber \\
S & \mapsto cs_*(S) \nonumber
\end{align}
where $cs_*(S)_n = S$ for all $n \geq 0$ and all face and degeneracy maps are $id_S$. By definition, $i = cs_*$, where $i:\kCAlg \rarr \skCAlg$ is the natural inclusion functor that sends a $k$-algebra $A$ to the constant simplicial object $A$ in $\skCAlg$ (\cite{T}). Let $s: A \rarr sA$ be the functor that maps a structured object $A$ to its simplicial version, a motif for simplicialization. We have $s|_{\kCAlg}$ has $i$ for specialization, and $s|_{\Set}$ has $j$ for specialization, and both correspond to the functor $ cs_*$.\\

Our aim is to show that a motif $\dSt$, depending on the motifs $s$ and $\Sh$, hence a higher motif, making the following diagram commute, is a motif for derived stacks:
\beq
\xymatrix{
	\kCAlg \ar[d]_s \ar[r]^{\Sh} & \Set \ar[d]^s \\
	\skCAlg \ar[r]_{\dSt} & \SetD
} \label{CD}
\eeq
The interest of having such a motif is that it would comprise all possible derived extensions given $s$ and $\Sh$. The aim of the present work is to prove that such a higher motif stands for a derived stack variable, i.e. that one can functorially construct derived stacks, and that such a construction is choice-free if written in the language of motifs. \\

Note that if one uses $s|_{\kCAlg}$ on the left and $s|_{\Set}$ on the right in the above diagram, both being motifs in their own right, in the language of motivic frames, one has a commutative diagram of motifs of degree 1. Observing that both motifs are really the same, namely simplicializing motifs, if one denotes by $s$ their overarching motif, using $s$ instead in the same commutative diagram elevates it to a diagram of motifs of degree 2. Here we use the convention that a motif of degree $i$ can be seen as a motif of degree $j$ for $j \geq i$, taking all gluing maps beyond the $i$-th level to be trivial. Thus $\Sh$ and $\dSt$ though they are of degree 1, are regarded as degree 2 motifs so that our resulting diagram is indeed a commutative diagram of motifs of degree 2. For notation's sake however, we will just use $s$ for either motif $s|_{\kCAlg}$ or $s|_{\Set}$. In other terms we implicitly work with specializations for the sake of building up the desired commutative diagram. \\

We ask that $s$ be a motif for a functorial simplicial frame, which in addition preserves finite limits. The important point being made here is that though one could just work with $cs_*$, one would like a functor that is equivalent to it, but is otherwise unspecified. Thus we work in greater generality. Functorial simplicial frames allow us to do that. To fix ideas, recall from \cite{Hi} that for $A$ an object of $\kCAlg$, we define a simplicial frame on $A$ to be a simplicial object $\hat{A} \in (\kCAlg)^{\Dop} = \skCAlg$, together with an equivalence $cs_*A \rarr \hat{A}$ in the Reedy model category structure on $\skCAlg$ such that the induced map $A \rarr \hat{A}_0$ is an isomorphism and if $A$ is fibrant in $\kCAlg$, so is $\hat{A}$ in $\skCAlg$. We then define a functorial simplicial frame on $\kCAlg$ to be given by a pair $(G,j)$, where $G: \kCAlg \rarr \skCAlg$ is a functor, $j:id \Rarr G$ is such that $j_A: cs_*A \rarr G(A)$ is a simplicial frame for any $A \in \kCAlg$. We abuse notation and refer to $G$ from $ (G,j)$ as the functorial simplicial frame. Then $s$ stands for a motif of functorial simplicial frame, $G$ being a specialization thereof.\\

Given a sheaf motif $\Sh$, and a functorial simplicial frame motif $s$ on $\kCAlg$, we have a corresponding higher motif $\dSt[\Sh,s] = \dSt$ which we aim to prove is a motif for derived stacks, so must satisfy the properties of stacks. Recall from \cite{T} that in the definition of a derived stack we have to use hypercovers, which necessitates the introduction of coaugmented, cosimplicial objects $\mathbf{A} \rarr \mathbf{B}_*$ in $\skCAlg$. Here we take $\mathbf{A}= s(A)$ for $A \in \kCAlg$, and $\mathbf{B}_*$ is a functor:
\begin{align}
\mathbf{B}_*: \Delta &\rarr \skCAlg \nonumber \\
n &\mapsto \mathbf{B}_n = s(B_n) \nonumber
\end{align}
This is a fundamental assumption in our work. Since we generate derived stacks from sheaves by using the motif $s$ for simplicialization, we regard simplicial commutative $k$-algebras $\mathbf{A}$ to come from some $A \in \kCAlg$, but we also regard the $\mathbf{B}_n$ above as being images of $B_n \in \kCAlg$. To fix notations, $\mathbf{B}_* \in \text{cs}( \skCAlg) $ where $\text{cs}$ stands for cosimplicial. In this work, we will consider coaugmented cosimplicial objects of the form $sA \rarr sB_*$, for $A \in \kCAlg$, $B_* \in \cskCAlg$. To see that this still produces a cosimplicial object, note that $s$ is defined on $\kCAlg$. Thus $sB_n$ makes sense for all $[n] \in \Delta$. Now functorially this makes $sB_*$ into a cosimplicial object of $\skCAlg$, since the functor $s: \kCAlg \rarr \skCAlg$ preserves the cosimplicial identities.\\

The reason for introducing cosimplicial objects is elementary; we will argue the opposite of an algebra $A \in \skCAlg$ is an object $\Spec A \in \dkAff$, and it is over such objects that we have hypercovers, given by simplicial objects $\Spec B_* \rarr \Spec A$. To see that taking a cosimplicial co-augmentation produces just that, it suffices to start from $B_*: \Delta \rarr \skCAlg$, so that $B_*^{\op} = \Spec B_*: \Delta^{\op} \rarr \skCAlgop = \dkAff$ is indeed in $\sdkAff$.\\

We are now ready to give the conditions $\dSt$ have to satisfy to be a derived stack. According to \cite{T} applied to our setting, the following conditions must be met:
\begin{itemize}
\item For any equivalence $s(A) \rarr s(B)$ in $\skCAlg$, $A,B \in \kCAlg$, the induced morphism $\dSt(sA) \rarr \dSt(sB)$ is an equivalence in $\SetD$.\\
\item For any coaugmented, cosimplicial object $s(A) \rarr s(B_*)$, corresponding to a \'et-hypercovering in $\dkAff = \skCAlg^{\op}$, the induced morphism $\dSt(sA) \rarr \holim_{n \in \Delta} \dSt(sB_n)$ is an equivalence in $\SetD$.\\
\item For any finite family $\{sA_i\}$ in $\skCAlg$, the natural morphism $\dSt(\prod sA_i) \rarr \prod \dSt(sA_i)$ is an equivalence in $\SetD$.\\
\end{itemize}

Since the construction of derived stacks is intimately linked to that of sheaves of sets on $\kCAlg$, we also give their definition as given in \cite{TV4}. We regard sets as constant simplicial sets. A functor $F: \kCAlg \rarr \Set$ is a sheaf if:
\begin{itemize}
\item For any equivalence $A \rarr B$ in $\kCAlg$, the induced morphism $F(A) \rarr F(B)$ is an equivalence in $\SetD$.\\
\item For any finite family $\{A_i\}_{i \in I}$ in $\kCAlg$, the natural morphism
\beq
F(\prod A_i) \rarr \prod_{i \in I} F(A_i) \nonumber
\eeq
is an equivalence in $\SetD$.\\
\item For any cosimplicial object $A \rarr B_*$ in $\kCAlg$ corresponding to a \'et-hypercover $\Spec B_* \rarr \Spec A$ in $\kAff$, the induced morphism $F(A) \rarr \holim_{n \in \Delta} F(B_n)$ is an equivalence in $\SetD$.
\end{itemize}

\section{Statement of the theorem and proof}
\begin{Thm} \label{Thm}
A higher motif $\dSt$ as defined by the commutative diagram \eqref{CD} is a derived stack.
\end{Thm}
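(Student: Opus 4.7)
The plan is to verify the three defining conditions of a derived stack for $dSt$, leveraging that \eqref{CD} determines $dSt$ on the essential image of $s$ via the equation $dSt(s(A)) = s(Sh(A))$, together with the assumption that $s$ is a functorial simplicial frame preserving finite limits and $Sh$ is a sheaf of sets on $\kCAlg$. Each condition on $dSt$ will be traded, via the commutative square, for the corresponding condition on $Sh$, using $s$ to transport equivalences from $sSet$ back to $sSet$.

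First I would handle the equivalence condition. Given an equivalence $s(A) \rarr s(B)$ in $\skCAlg$, evaluate in simplicial degree zero; since $s$ is a simplicial frame one has $s(A)_0 \cong A$ and $s(B)_0 \cong B$, so the induced $A \rarr B$ is an equivalence in $\kCAlg$. The sheaf condition on $Sh$ then forces $Sh(A) \rarr Sh(B)$ to be an equivalence (hence a bijection) in $Set$, and applying $s$ (which, as a specialization of $cs_*$, preserves equivalences between discrete targets) delivers the required equivalence $dSt(sA) \rarr dSt(sB)$.

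Next I would treat the finite products condition. Since $s$ preserves finite limits, $\prod s(A_i) \cong s(\prod A_i)$ in $\skCAlg$, and hence by \eqref{CD}, $dSt(\prod s(A_i)) \cong s(Sh(\prod A_i))$. The finite-product sheaf condition on $Sh$ yields $Sh(\prod A_i) \cong \prod Sh(A_i)$ in $\Ho(sSet)$ (the homotopy product in $\kCAlg$ and the strict product agreeing for discrete objects). Applying $s$ and using again that $s$ preserves finite products on the right, one obtains $\prod s(Sh(A_i)) = \prod dSt(s(A_i))$, concluding this case.

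The remaining hypercover condition is the hard part, and will require two auxiliary steps. Given a coaugmented cosimplicial object $s(A) \rarr s(B_*)$ in $\skCAlg$ corresponding to an ffqc-hypercover in $\dkAff$, one must first show that this arises, through the inclusion $i = cs_*$, from an ffqc-hypercover $A \rarr B_*$ in $\kCAlg^{\op} = \kAff$; this amounts to verifying that the model ffqc topology on $\dkAff$ restricts, along $i$, to the ffqc topology on $\kAff$, which I expect to be encoded in the lemmas \textbf{ffqczero} and \textbf{cscommutes} suggested by the preamble. Granted this, the sheaf condition on $Sh$ yields an equivalence $Sh(A) \rarr \holim_{n \in \Delta} Sh(B_n)$ in $\Ho(sSet)$. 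Finally, one must commute $s$ with $\holim_{n \in \Delta}$: since $\holim$ over $\Delta$ may be computed as a cosimplicial matching tower, and a functorial simplicial frame interacts well with matching objects (being Reedy-equivalent to $cs_*$, which trivially commutes with such limits), one obtains $s\bigl(\holim_n Sh(B_n)\bigr) \simeq \holim_n s(Sh(B_n))$. Composing these identifications with \eqref{CD} yields the equivalence $dSt(sA) \rarr \holim_n dSt(sB_n)$. The main obstacle is precisely this last interchange of $s$ with $\holim$ together with the restriction of topologies under $i$; the other two conditions are essentially formal consequences of commutativity of \eqref{CD} and limit-preservation of $s$.
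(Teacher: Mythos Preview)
Your three-part decomposition matches the paper's exactly, and your identification of the two substeps inside hyperdescent (restriction of the ffqc hypercover along $cs_*$, then interchange of $s$ with $\holim$) is precisely what the paper does. Two places diverge from the paper and deserve comment.

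In the equivalence step, your ``evaluate in simplicial degree zero'' move is not justified as written: a weak equivalence $s(A)\to s(B)$ in $\skCAlg$ is a $\pi_*$-isomorphism, not a levelwise isomorphism, so passing to degree $0$ does not immediately give an equivalence $A\to B$. The paper instead composes with the frame equivalences $cs_*A\xrightarrow{\sim}s(A)$ and $cs_*B\xrightarrow{\sim}s(B)$ to obtain an isomorphism $cs_*A\to cs_*B$ in $\Ho(\skCAlg)$, and then invokes that $cs_*:\kCAlg\to\Ho(\skCAlg)$ is fully faithful to extract an equivalence $A\to B$. Your route can be repaired (for instance by arguing through $\pi_0$ rather than the $0$-th level), but as stated it is a gap.

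For the interchange $s(\holim_n Sh(B_n))\simeq \holim_n s(Sh(B_n))$, the paper takes a shorter path than your matching-tower sketch: it cites that $\holim$ is computed as an equalizer, hence a limit, and then uses the standing hypothesis that $s$ preserves finite limits. Your Reedy/matching argument is not obviously easier to make rigorous, since the tower is infinite and the only limit-preservation you have for $s$ is finite. Finally, for the restriction-of-topology substep you correctly name the target statement but do not supply an argument; the paper spends several lemmas on this (comparing $\sAopRK$ with $\csAopRK$, matching the two homotopy fiber products, and then using that $cs_*$ is constant cosimplicially to strip the simplicial direction), and this is where most of the work lies.
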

There are three points to be checked, and each will be the subject of a subsection.

\subsection{$\dSt$ preserves equivalences}
Suppose $s(A) \rarr s(B)$ is an equivalence in $\skCAlg$, for $A,B \in \kCAlg$. If we denote equivalences by $\sim$, owing to the fact that $s$ is a functorial simplicial frame, we have:
\beq
\xymatrix{
	cs_*(A) \ar[d] \ar[r]^{\sim} & s(A) \ar[d]^{\sim} \\
cs_*(B) \ar[r]_{\sim} &s(B)
} \nonumber
\eeq
$cs_*(A) \rarr cs_*(B)$ is an equivalence by using the 2-3 property twice in the above diagram, so componentwise this gives an equivalence $A \rarr B$ in $\kCAlg$. Now $\dSt$ is built from a motif of sheaves $\Sh$ which is a sheaf, and by the sheaf condition $A \xrarr{\sim} B$ implies $\Sh(A) \xrarr{\sim} \Sh(B)$ in $\Set$. Since each of $\Sh(A)$ and $\Sh(B)$ are sets and are regarded as constant simplicial sets in $\SetD$, this reads $cs_* \Sh(A) \xrarr{\sim} cs_*\Sh(B)$. Since $s$ is a functorial simplicial frame motif, we have a commutative diagram:
\beq
\xymatrix{
	cs_* \Sh(A)\ar[d]_{\sim} \ar[r]^{\sim} & s(\Sh(A)) \ar[d]^p  \\
cs_* \Sh(B) \ar[r]_{\sim} &s(\Sh(B))
} \nonumber
\eeq
and by using the 2 out of 3 property twice in this diagram we have that $p$ is a weak equivalence. Finally by commutativity of \eqref{CD} $\dSt(sA) = s(\Sh(A)) \xrarr{\sim} s(\Sh(B)) = \dSt(sB)$ so $\dSt$ does preserve equivalences.

\subsection{$\dSt$ satisfies hyperdescent}
Consider a coaugmented cosimplicial object $s(A) \rarr s(B_*)$ in $\skCAlg$ corresponding to a \'et-hypercover $\Spec(s(B_*)) \rarr \Spec(s(A))$ in $\dkAff = \skCAlg^{\op}$. Recall what this means: $sA$ is regarded as a constant cosimplicial object of $\skCAlg$. We have to show $\dSt(sA) \xrarr{\sim} \holim_{n \in \Delta} \dSt(sB_n)$ in $\SetD$. Recall from \cite{TV2} that the \'etale topology on $\dkAff$ induces \'etale hypercoverings: if $\Spec s(A)$ is an object of the site $(\dkAff, \ettop)$, a homotopy \'et-hypercover of $\Spec s(A)$ is given by some $\Spec s(B_*)$ in $\Ho(\sdkAff)$ along with a morphism $\Spec s(B_*) \rarr \Spec s(A)$ in $\Ho(\sdkAff)$ such that for all $n\geq 0$:
\beq
\SpsB^{\RD} \rarr \SpsB^{\RbD} \times^h_{\SpsA^{\RbD}} \SpsA^{\RD} \nonumber
\eeq
is a \'et-covering in $\Ho(\dkAff)$. From \cite{T} a finite family $\{f_i: sA \rarr sB_i \}_{i \in I}$ in $\skCAlg$ is a \'et-covering if and only if there is a finite subset $J$ of $I$ such that: 
\beq
\coprod_{j \in J} \Spec \pi_0 (sB_j) \rarr \Spec \pi_0(sA) \nonumber
\eeq
is \'etale and surjective, and for all $i \in I$:
\beq
\pi_*(sA) \otimes_{\pi_0(sA)} \pi_0(sB_i) \rarr \pi_*(sB_i) \nonumber
\eeq
is an isomorphism.\\

We first have to show that given the conditions on $s(A) \rarr s(B_*)$ above, $A \rarr B_*$ is a coaugmented cosimplicial object in $\kCAlg$ corresponding to a \'et-hypercovering $\Spec B_* \rarr \Spec A$ in $\kAff$. The idea is that we are projecting down to the level of sheaves the descent condition, and we eventually use the functoriality of $s$ to conclude that the motif $\dSt$ satisfies hyperdescent as well.\\

\subsubsection{$ \Spec B_* \rarr \Spec A$ is a \'et-hypercovering in $\kAff$}
The aim of this subsection is to show that for all $n \geq 0$:
\beq
\Spec B_*^{\RD} \rarr \Spec B_* ^{\RbD} \times^h _{\Spec A^{\RbD}} \Spec A^{\RD} \label{zCD}
\eeq
is a \'et-covering in $\Ho(\kAff)$. This will be done in two steps. We will first show
\beq
\csBopRD \rarr \csBopRbD \times^h_{\csAopRbD} \csAopRD \nonumber
\eeq
is a \'et-covering in $\Ho(\dkAff)$, from which it will follow \eqref{zCD} is a \'et-covering in $\Ho(\kAff)$.\\

We first rewrite:
\beq
\SpsB^{\RD} \rarr \SpsB^{\RbD} \times^h_{\SpsA^{\RbD}} \SpsA^{\RD} \nonumber
\eeq
as:
\beq
\sBopRD \rarr \sBopRbD \times^h _{\sAopRbD} \sAopRD \nonumber
\eeq
From \cite{TV} $\sdkAff$ being a simplicial model category it is tensored and cotensored over $\SetD$, hence for $F_* \in \sdkAff$, since $\Delta^n \in \SetD$, we have a well-defined product $\uD \otimes F_*$, and with $\sBop \in \sdkAff$, we have by adjunction:
\beq
\Hom(\uD \otimes F_* , \sBop) \cong \Hom(F_*, \sBopuD) \nonumber
\eeq
as well as:
\beq
\Hom(\uD \otimes F_*, \csBop) \cong \Hom(F_*, \csBopuD) \nonumber
\eeq
Since the exponential map is natural in both arguments, having a map $\sBop \rarr \csBop$, we have maps:
\beq
\xymatrix{
	\sBopuD \ar[d]_{\partial_0} \ar[r]^{\phi} & \csBopuD \ar[d]^{\partial_0} \\
\sBopD \ar[d]_{\gamma} \ar[r]^{\phi_0} & \csBopD  \ar[d]^{\gamma}\\
\sBopRD \ar[r]_{\gamma \phi_0} &\csBopRD
} \label{gammaphi0}
\eeq
where $\partial_0$ is the degree zero map with $\phi_0$ the map on degree zero elements induced by $\phi$, and $\gamma$ is the canonical functor from a given category to its homotopy category. In the same manner we have maps:
\beq
\sBopRbD \rarr \csBopRbD \label{gammaphi1}
\eeq
\beq
\sAopRD \rarr \csAopRD \label{gammaphi2}
\eeq
and:
\beq
\sAopRbD \rarr \csAopRbD \label{gammaphi3}
\eeq
which induce a map of homotopy fiber products:
\beq
\xymatrix{
	\sBopRbD \times^h_{\sAopRbD} \sAopRD \ar[d]\\
\csBopRbD \times^h_{\csAopRbD} \csAopRD
} \label{hfp}
\eeq
which when combined with the bottom horizontal map of \eqref{gammaphi0} yields:
\beq
\xymatrix{
	\sBopRD \ar[d] \ar[r] &\sBopRbD \times^h_{\sAopRbD} \sAopRD \ar[d] \\
\csBopRD  &\csBopRbD \times^h_{\csAopRbD} \csAopRD
}
\eeq
The top horizontal map is a \'et-covering in $\Ho(\dkAff)$. We would like to fill the bottom map and show that it is a \'et-covering as well. In order to do this we will show that both vertical maps are equivalences, which will give us a bottom horizontal map since we work in $\Ho(\dkAff)$. This will also tell us this map gives us a \'et-covering. In a first time to show the vertical maps above are weak equivalences, we will need all maps in \eqref{gammaphi0}, \eqref{gammaphi1}, \eqref{gammaphi2} and \eqref{gammaphi3} to be equivalences so we prove the more general fact:
\begin{sAopRKlemma}
For $K= \Delta^n$ or $K= \partial \Delta^n$, $A \in \cskCAlg$, the map $\sAopRK \rarr \csAopRK$ is a weak equivalence.
\end{sAopRKlemma}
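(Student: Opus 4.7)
The plan is to reduce the claim to the defining weak equivalence of a simplicial frame, and then invoke the homotopy invariance of the derived cotensor in the simplicial model category $\dkAff$.

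By definition of the functorial simplicial frame $s = (G, j)$, for each $A \in \kCAlg$ (and degreewise for $A \in \cskCAlg$) the structure map $j_A \colon cs_*A \rarr sA$ is a weak equivalence in the Reedy model structure on $\skCAlg$; moreover, $sA$ is Reedy fibrant whenever $A$ is fibrant. Passing to the opposite category $\dkAff = \skCAlg^{\op}$ reverses this arrow, producing a weak equivalence $\sAop \rarr \csAop$ in $\dkAff$.

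By \cite{TV3}, $\dkAff$ is a simplicial model category, hence tensored and cotensored over $sSet$, and the cotensor bifunctor $(X, K) \mapsto X^K$ is right Quillen in each variable. In particular, for any $K$ cofibrant in $sSet$ the functor $(-)^K$ is right Quillen, so its total right derived functor $\mathbb{R}(-)^K = (-)^{\mathbb{R}K}$ preserves weak equivalences by Ken Brown's lemma. Since $K = \Delta^n$ and $K = \partial \Delta^n$ are both cofibrant in $sSet$, this applies to both cases of the lemma.

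Applying the derived cotensor to the equivalence $\sAop \xrarr{\sim} \csAop$ then yields the desired weak equivalence $\sAopRK \xrarr{\sim} \csAopRK$. The main potential obstacle is making sure that the fibrancy hypotheses needed to identify the derived cotensor with the strict cotensor on the objects at hand are genuinely in force; this is, however, precisely what the fibrancy clause in the definition of a simplicial frame delivers, supplemented (if necessary) by a functorial fibrant replacement in $\dkAff$. In this way the technical content of the lemma is entirely absorbed into the definition of $s$, and the result is essentially formal.
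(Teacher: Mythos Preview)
Your argument is correct and follows the same underlying idea as the paper: both start from the defining Reedy equivalence $\sAop \xrarr{\sim} \csAop$ coming from the simplicial frame, and both observe that the derived cotensor $(-)^{\RK}$ sends this to an equivalence. The difference is purely one of packaging. You invoke this preservation abstractly via Ken Brown's lemma for the right Quillen cotensor, whereas the paper unwinds the same statement by hand: it uses Hirschhorn's criterion that a map of fibrant objects is an equivalence iff $\Hom(Z,-)$ detects it for all cofibrant $Z$, then uses the tensor--cotensor adjunction to reduce to the equivalence $R(\sAop)\xrarr{\sim} R(\csAop)$, which is exactly the fibrant-replaced version of the simplicial-frame equivalence you start from.

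Two small points worth tightening. First, for $A\in\cskCAlg$ the objects $\sAop$ and $\csAop$ live in $\sdkAff$ (simplicial objects in $\dkAff$ with its Reedy structure), not in $\dkAff$ itself; the simplicial model structure you need is the one on $\sdkAff$, as the paper makes explicit. Second, in the paper's conventions $X^{\RK}$ is not just the derived cotensor but its degree-zero component, $X^{\RK}=(X^{\RuK})_0$; your argument yields a Reedy equivalence $\sAopRuK\xrarr{\sim}\csAopRuK$, and one then uses (as the paper does, citing \cite{Hi} 15.3.11) that evaluation at degree zero sends Reedy equivalences to equivalences. Neither point affects the validity of your approach, but both should be stated to match the paper's definitions.
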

\begin{proof}
We use the fact that $X_*^{\RK} = (X_* ^{\RuK})_0$ as shown in \cite{TV4}, obtained by first taking a fibrant replacement of $X_*$, followed by taking the exponential by $\uK$, and then taking the degree zero component. We will use the following fact from \cite{Hi}: that $\sdkAff$ being a simplicial model category, for any fibrant objects $X$ and $Y$ in $\sdkAff$, $g:X \rarr Y$ is an equivalence if and only if for any cofibrant $Z$ in $\sdkAff$ we have $\Hom(Z, X) \simeq \Hom(Z,Y)$ in $\SetD$. Thus we fix some cofibrant object $F_*$ in $\sdkAff$ and consider
\begin{align}
\Hom(F_*, \sAopRuK) &= \Hom(QF_*, \sAopRuK) \nonumber \\
&\simeq\Hom(\uK \otimes QF_*, R(\sAop)) \nonumber \\
&= \Hom(Q(\uK \otimes F_*), R(\sAop)) \label{square}
\end{align}
Now again we can invoke that same result from \cite{Hi} for $R(\sAop) \rarr R(\csAop)$ since in the diagram below, vertical maps are trivial cofibrations, the top horizontal map is a weak equivalence by definition of $s$, so by the 2 out of 3 property applied twice, the bottom horizontal map is a weak equivalence as well:
\beq
\begin{CD}
\sAop @>>> \csAop \\
@VVV @VVV \\
R(\sAop) @>>> R(\csAop)
\end{CD} \nonumber
\eeq
Being an equivalence from \eqref{square} we have:
\begin{align}
\Hom(F_*, \sAopRuK) &\simeq \Hom(Q(\uK \otimes F_*), R(\sAop)) \nonumber\\
&\simeq \Hom(Q(\uK \otimes F_*), R(\csAop)) \nonumber\\
&\simeq \Hom(F_*, \csAopRuK) \nonumber
\end{align}
hence $\sAopRuK \rarr \csAopRuK$ is a weak equivalence, more precisely, a Reedy equivalence in the Reedy model structure for $\sdkAff$. Those are levelwise equivalences, so its degree zero component $\sAopRK \rarr \csAopRK$ is an equivalence as well.
\end{proof}
\begin{sAopRKCor}
The maps \eqref{gammaphi0}, \eqref{gammaphi1}, \eqref{gammaphi2}, \eqref{gammaphi3} are all weak equivalences.
\end{sAopRKCor}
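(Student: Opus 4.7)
The plan is to read the corollary as four applications of the lemma just proved, with matching inputs. The lemma gives, for any $A \in \cskCAlg$ and $K \in \{\Delta^n, \partial \Delta^n\}$, a weak equivalence $\sAopRK \to \csAopRK$. Each of the four labelled maps is exactly of this form for an appropriate choice of the pair $(A,K)$.

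Specifically, I would pair them off as follows. For \eqref{gammaphi2}, take $A \in \kCAlg$ (viewed as a constant cosimplicial object in $\cskCAlg$) and $K = \Delta^n$, which gives $\sAopRD \to \csAopRD$. For \eqref{gammaphi3}, keep this $A$ but take $K = \partial \Delta^n$, yielding $\sAopRbD \to \csAopRbD$. For \eqref{gammaphi1}, take $A = B_* \in \cskCAlg$ and $K = \partial \Delta^n$, yielding $\sBopRbD \to \csBopRbD$. Finally, for the bottom arrow $\gamma \phi_0$ in \eqref{gammaphi0}, take $A = B_*$ and $K = \Delta^n$, giving $\sBopRD \to \csBopRD$. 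In each case the hypotheses of the lemma are literally satisfied, so the stated map is a weak equivalence.

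The two intermediate horizontal maps $\phi$ and $\phi_0$ in \eqref{gammaphi0} are not separate claims of the corollary: they are just the intermediate stages in the construction of $\gamma\phi_0$ (cotensor by $\underline{\Delta}^n$, then degree-zero part, then passage to the homotopy category) starting from the frame-induced map $\sBop \to \csBop$. The substantive content of \eqref{gammaphi0}, as used in the subsequent argument about \eqref{hfp}, is only that the derived map on the bottom row is an equivalence, which is exactly what the lemma supplies.

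The only mildly delicate point — and really the only thing that could be called an obstacle — is confirming that the lemma applies with $A = B_*$, i.e. with $A$ genuinely a (nonconstant) object of $\cskCAlg$ rather than of $\kCAlg$. Since the lemma was stated in exactly that generality, there is nothing further to check, and the corollary follows.
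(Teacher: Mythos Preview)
Your proposal is correct and follows essentially the same approach as the paper. For the $B_*$ cases (\eqref{gammaphi0} and \eqref{gammaphi1}) both you and the paper simply invoke the lemma with $A=B_*$; for the $A\in\kCAlg$ cases (\eqref{gammaphi2} and \eqref{gammaphi3}) you embed $A$ as a constant cosimplicial object and apply the lemma directly, while the paper instead embeds $(sA)^{\op}$ as a constant simplicial object $cs_*((sA)^{\op})\in\sdkAff$, makes the identification $\sAopRK=(cs_*(\sAop))^{\RK}$ explicit, and then re-runs the lemma's argument rather than citing it. These two embeddings coincide (since $(s\,cc_*A)^{\op}=cs_*((sA)^{\op})$), so the difference is purely presentational; your version is a little more streamlined, while the paper's version makes the passage from $\dkAff$ to $\sdkAff$ visible.
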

\begin{proof}
For the first two equations it's immediate, we just take $A = B_*$ in the previous lemma. For the other two equations, $A \in \kCAlg$ gives $sA \in \skCAlg$, hence $\sAop \in \dkAff$, regarded as a constant simplicial object $cs_*(\sAop) \in \sdkAff$. Since $\sAop \xrarr{\sim} \csAop$, we also have $cs_*(\sAop) \xrarr{\sim} cs_*(\csAop)$ since equivalences in the Reedy model structure of $\sdkAff$ are levelwise equivalences. Now for $K = \Delta^n$ or $K = \partial \Delta^n$, following the same reasoning as in the proof of the previous lemma we find that $(cs_*(\sAop))^{\RK} \rarr (cs_*(\csAop))^{\RK}$ is a weak equivalence, and in a simplified notation this reads $\sAopRK \rarr \csAopRK$ is an equivalence. 
\end{proof}
As a consequence of having those equivalences we prove the fiber products in \eqref{hfp} are equivalent:
\begin{hfpequ}
The map of homotopy fiber products:
\beq
	\xymatrix{
	\sBopRbD \times^h_{\sAopRbD} \sAopRD \ar[d]\\
\csBopRbD \times^h_{\csAopRbD} \csAopRD
}
\eeq
is a weak equivalence
\end{hfpequ}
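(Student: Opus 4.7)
The plan is to apply the homotopy invariance of homotopy pullbacks to the morphism of cospans
\beq
\bigl(\sBopRbD \rarr \sAopRbD \larr \sAopRD\bigr) \longrightarrow \bigl(\csBopRbD \rarr \csAopRbD \larr \csAopRD\bigr) \nonumber
\eeq
whose three vertical arrows are precisely the maps \eqref{gammaphi1}, \eqref{gammaphi3}, and \eqref{gammaphi2}, and to conclude directly from the preceding corollary.

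First I would recall, following \cite{Hi}, that $\sdkAff$ is a proper simplicial model category, so that the homotopy pullback construction descends to a homotopy functor on the category of cospans: any levelwise weak equivalence of cospans induces a weak equivalence on homotopy fiber products. This is a standard cube/gluing argument: by replacing the cospans, if necessary, by equivalent ones in which the two maps into the apex are fibrations between fibrant objects, the strict pullback models the homotopy pullback, and then the claim reduces to the 2-out-of-3 property applied to the commutative square built from the strict pullbacks and the vertex weak equivalences.

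Second, I would assemble the relevant cube by stacking the two cospans above and noting that the three vertical arrows between them are objectwise weak equivalences: the map $\sBopRbD \rarr \csBopRbD$ by \eqref{gammaphi1}, the map $\sAopRbD \rarr \csAopRbD$ by \eqref{gammaphi3}, and the map $\sAopRD \rarr \csAopRD$ by \eqref{gammaphi2}. Applying the invariance principle then yields that
\beq
\sBopRbD \times^h_{\sAopRbD} \sAopRD \longrightarrow \csBopRbD \times^h_{\csAopRbD} \csAopRD \nonumber
\eeq
is a weak equivalence, as required.

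The only potential obstacle is purely bookkeeping: one has to be sure that the homotopy fiber products being compared are taken with respect to compatible choices of fibrant/cofibrant replacements on both sides, so that the vertical maps actually descend to a well-defined map on homotopy pullbacks in $\Ho(\sdkAff)$. Since $s$ is functorial and the maps \eqref{gammaphi1}--\eqref{gammaphi3} are natural in $A$ and $B_*$, these compatibilities are automatic, and no genuinely new technical input is needed beyond the corollary already proved.
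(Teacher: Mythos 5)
Your proof is correct and takes essentially the same route as the paper: both apply the cube/gluing lemma for homotopy pullbacks (Hirschhorn 15.10.10) to the map of cospans whose legs are the equivalences \eqref{gammaphi1}, \eqref{gammaphi2}, \eqref{gammaphi3} established in the preceding corollary. The paper is more explicit where you defer to ``bookkeeping'' --- it verifies fibrancy of all eight vertices (Hi 9.3.9 and 15.3.11) and that the maps into the apex are fibrations (Hi 9.3.9(2b) with 15.3.11) --- and note that no appeal to properness is needed once those hypotheses are checked, nor does the gluing step reduce to a single application of two-out-of-three.
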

\begin{proof}
We are looking at the following diagram:
\beq
\setlength{\unitlength}{0.5cm}
\begin{picture}(19,21)(0,0)
\thicklines
\put(0,3){$\csBopRbD$}
\put(9,0){$\csAopRbD$}
\put(15,3){$\csAopRD$}
\put(3,7){$\csBopRbD \times^h_{\csAopRbD} \csAopRbD$}
\put(9,13){$\sAopRbD$}
\put(15,16){$\sAopRD$}
\put(0,16){$\sBopRbD$}
\put(3,20){$\sBopRbD \times^h_{\sAopRbD} \sAopRD$}
\put(4,2){\vector(4,-1){4}}
\put(17,2){\vector(-2,-1){2}}
\put(15,6){\vector(2,-1){2}}
\put(5,6){\vector(-2,-1){2}}
\put(1,15){\vector(0,-1){10}}
\put(2,11){$\wr$}
\put(11,12){\vector(0,-1){10}}
\put(12,10){$\wr$}
\put(18,15){\vector(0,-1){10}}
\put(19,12){$\wr$}
\put(17,15.5){\vector(-2,-1){3}}
\put(4,15){\vector(4,-1){4}}
\put(15,19){\vector(2,-1){2}}
\put(5,19){\vector(-3,-1){3}}
\multiput(7,19)(0,-0.3){33}{\line(0,-1){0.15}}
\put(7,9){\vector(0,-1){0.5}}
\end{picture} \nonumber
\eeq
to prove that the vertical map in the back is an equivalence we will use 15.10.10 from \cite{Hi}: in a model category in which we have a diagram such as the one below:
\beq
\setlength{\unitlength}{0.5cm}
\begin{picture}(9,11)(0,0)
\thicklines
\put(0,9){$A$}
\put(5.2,9.2){$B$}
\put(3,6){$A'$}
\put(8,6){$B'$}
\put(0,4){$C$}
\put(5,4){$D$}
\put(3,1){$C'$}
\put(8,1){$D'$}
\put(1,9.5){\vector(1,0){4}}
\put(6,9){\vector(1,-1){2}}
\put(7.5,8){$r_B$}
\put(4,6.5){\vector(1,0){4}}
\put(1,9){\vector(1,-1){2}}
\put(1,7){$r_A$}
\put(5.5,9){\line(0,-1){2}}
\put(5.5,6){\vector(0,-1){1}}
\put(6,7){$p$}
\put(8.5,5.5){\vector(0,-1){3.5}}
\put(9,4){$p'$}
\put(3.5,5.5){\vector(0,-1){3.5}}
\put(0.5,8.5){\vector(0,-1){3.5}}
\put(6,4){\vector(1,-1){2}}
\put(6,2.5){$r_D$}
\put(4,1.5){\vector(1,0){4}}
\put(1,4){\vector(1,-1){2}}
\put(1,2){$r_C$}
\put(1,4.5){\line(1,0){2}}
\put(4,4.5){\vector(1,0){1}}
\end{picture} \nonumber
\eeq
in which all objects are fibrant, the squares in the front and in the back are pullbacks, $p$ and $p'$ are fibrations, $r_B$, $r_C$ and $r_D$ are equivalences, then so is $r_A$. We apply this to our setting, where our initial model category is $\sdkAff$, in which we took fibrant replacements of $sA$, $sB_*$, $cs_*A$ and $cs_*B_*$ already. By 9.3.9 of \cite{Hi} since $\sdkAff$ is a simplicial model category the exponentials of such objects are fibrant as well, and so are their zero components by 15.3.11 of \cite{Hi}, so all objects are fibrant as needed. The pullback squares are given by the homotopy fiber products of the previous diagram, the maps $r_B$, $r_C$ and $r_D$ are the vertical equivalences in that diagram, so $r_A$ corresponds to the map of homotopy fiber products. We take for $p$ and $p'$ the following maps:
\beq
\sAopRD \rarr \sAopRbD \nonumber
\eeq
and:
\beq
\csAopRD \rarr \csAopRbD \nonumber
\eeq
We will use the following fact 9.3.9 2b) from \cite{Hi}: if $X$ is fibrant in a simplicial model category and $j:K \rarr L$ is an inclusion in $\SetD$, then $X^L \rarr X^K$ is a fibration. Applying this to the first map above for example, one gets that:
\beq
\xymatrix{
	(R \sAop)^{\uD} \ar@{=}[d] \ar[r]  &  (R \sAop)^{\ubD} \ar@{=}[d]\\
\sAopRuD & \sAopRubD
} \nonumber
\eeq
is a fibration, more precisely, a Reedy fibration, hence by 15.3.11 of \cite{Hi} the degree zero component is a fibration as well:
\beq
\xymatrix{
	[\sAopRuD]_0 \ar@{=}[d] \ar[r] &[\sAopRubD]_0 \ar@{=}[d]\\
\sAopRD & \sAopRbD
} \nonumber
\eeq
One shows in the same manner that $\csAopRD \rarr \csAopRbD$ is a fibration. This completes the proof that the map of homotopy fiber products is a weak equivalence.
\end{proof}
\begin{equforcs} \label{equforcs}
	The map $\csBopRD \rarr \csBopRbD \times^h_{\csAopRbD} \csAopRD$ is a \'et-covering in $\Ho(\dkAff)$.
\end{equforcs}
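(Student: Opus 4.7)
The plan is to transfer the ffqc-covering property from the top horizontal map to the bottom horizontal map of the square
\beq
\begin{CD}
\sBopRD @>>> \sBopRbD \times^h_{\sAopRbD} \sAopRD \\
@VVV @VVV \\
\csBopRD @. \csBopRbD \times^h_{\csAopRbD} \csAopRD
\end{CD} \nonumber
\eeq
using the two weak equivalences supplied by the preceding results.

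First, I would invoke the corollary to see that the left vertical map is a weak equivalence, and the previous lemma to see that the right vertical map is a weak equivalence. Passing to $\Ho(\dkAff)$, both vertical maps become isomorphisms, so composing the top map with the right vertical map and precomposing with the inverse of the left vertical map produces a canonical bottom horizontal arrow
\beq
\csBopRD \rarr \csBopRbD \times^h_{\csAopRbD} \csAopRD \nonumber
\eeq
that makes the square commute up to homotopy in $\Ho(\dkAff)$.

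The remaining point is to verify that this bottom map is itself a ffqc-covering. The characterization of ffqc-coverings in $\dkAff$ recalled at the start of this subsection depends only on the induced morphism of affine schemes $\Spec \pi_0$ (flatness and surjectivity) together with the base-change comparison on the higher homotopy groups $\pi_i$, and all of this data is invariant under weak equivalence. Since the top map is a ffqc-covering by hypothesis on $s(A) \rarr s(B_*)$, and since the two vertical equivalences induce isomorphisms on every $\pi_i$, the bottom map inherits all the ffqc-covering conditions. The main point to be careful about is that the vertical equivalences really are the Reedy equivalences produced in the preceding lemma, so that the passage to $\pi_*$ is legitimate; once this is noted, the conclusion is immediate.
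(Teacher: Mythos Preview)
Your proposal is correct and follows essentially the same route as the paper: use the two vertical equivalences to produce the bottom map in $\Ho(\dkAff)$, then observe that the ffqc-covering condition is a weak-equivalence invariant because it is phrased entirely in terms of the $\pi_i$. The paper adds one extra justification you leave implicit, namely that $\pi_i$ is computed via $|A| = \Map_{\cC}(1,A)$ and that $\Map_{\cC}(1,-)$ sends equivalences of fibrant objects to equivalences in $sSet$ (Hirschhorn 9.3.3), which is what makes the invariance claim precise.
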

\begin{proof}
The previous lemma shows that the right vertical map in the diagram:
\beq
	\xymatrix{
		\sBopRD \ar[d] \ar[r] & \sBopRbD \times^h_{\sAopRbD} \sAopRD \ar[d]\\
	\csBopRD \ar@{.>}[r] & \csBopRbD \times^h_{\csAopRbD} \csAopRD
} \nonumber
\eeq
is an equivalence. The vertical map on the left has already been shown to be an equivalence as well. Hence in $\Ho(\dkAff)$ we have a bottom horizontal map as shown in the diagram above. We show this is a \'et-covering. From \cite{TV4} the definition of coverings uses the higher homotopy groups $\pi_i$ for $i \geq 0$. Recall that for $\cC$ a simplicial model category, $A \in \cC$, $|A| = \Map_{\cC}(1, A) \in \Ho(\SetD)$. If $\cC$ is pointed, $|A|$ has a natural basepoint and we can define $\pi_i(A) = \pi_i(|A|,*)$. Here $\Map_{\cC}$ is the simplicial hom in the simplicial model category $\cC$ which for us we take to be $\skCAlg$. By 9.3.3 of \cite{Hi} since 1 is cofibrant, if $A \xrarr{\sim} B$ is an equivalence of fibrant objects, then $\Map_{\cC}(1,A) \rarr \Map_{\cC}(1,B)$ is an equivalence in $\SetD$, i.e. $|A| \xrarr{\sim} |B|$, and by definition of higher homotopy groups on $\skCAlg$ we conclude that being a \'et-covering is an invariant on weak equivalence classes, whence the result.
\end{proof}
We finally prove:
\begin{ffqczero} \label{ffqczero}
	The map $\BopRD \rarr \BopRbD \times^h_{\AopRbD} \AopRD$ gives a \'et-covering in $\Ho(\kAff)$.
\end{ffqczero}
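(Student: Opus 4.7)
The strategy is to descend the ffqc-covering in $\dkAff$ established in Lemma \ref{equforcs} down to $\kAff$ by taking $\pi_0$ componentwise. The key observation is that Lemma \ref{equforcs} furnishes a ffqc-covering whose objects are all built by applying $cs_*$ to classical $k$-algebras, and on such constant simplicial objects the pair $\pi_0\dashv cs_*$ acts as mutually inverse equivalences on the classical part of the data.

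First I would identify the $\pi_0$'s of the $cs_*$-exponentials. For any $C\in\kCAlg$, the object $cs_*C\in\skCAlg$ is constant, hence Reedy fibrant, with $\pi_0(cs_*C)=C$ and $\pi_i(cs_*C)=0$ for $i>0$. I would show that the derived simplicial cotensor $(cs_*C)^{\RuK}$ in $\sdkAff$ is weakly equivalent to the constant simplicial object on the classical $K$-power $C^K$ taken in $\kAff$, so that passing to degree-zero components gives $\pi_0(\csBopRK)\cong B_*^{\op,K}$ and $\pi_i=0$ for $i>0$, and analogously with $A$ in place of $B_*$, for both $K=\Delta^n$ and $K=\partial\Delta^n$. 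This relies on the fact that constant simplicial diagrams in the Reedy model structure on $\sdkAff$ are preserved (up to weak equivalence) by both fibrant replacement and by simplicial cotensor with $\uK$.

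Second, because the homotopy fibre product $\csBopRbD\times^h_{\csAopRbD}\csAopRD$ has inputs with vanishing higher homotopy, its own higher homotopy vanishes and its $\pi_0$ is the classical fibre product in $\kAff$, which we denote $\BopRbD\times^h_{\AopRbD}\AopRD$. Combining with Step 1, applying $\pi_0$ to the map of Lemma \ref{equforcs} yields exactly the map $\BopRD\rarr\BopRbD\times^h_{\AopRbD}\AopRD$ in $\kAff$.

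Third, I would unpack the definition of ffqc-covering in $\dkAff$ recalled in the preamble of this subsection. The condition that the natural maps $\pi_i(sA)\otimes_{\pi_0(sA)}\pi_0(sB_i)\rarr\pi_i(sB_i)$ be isomorphisms for $i>0$ is automatic on constant simplicial objects since both sides vanish; flatness of the induced map of classical spectra on $\pi_0$ and surjectivity of $\coprod\Spec\pi_0\rarr\Spec\pi_0$ are exactly the defining conditions of a classical ffqc-covering in $\kAff$. Hence, via the identifications of Steps 1 and 2, the conclusion of Lemma \ref{equforcs} transfers directly, giving the desired ffqc-covering in $\kAff$. The main obstacle is Step 1: showing carefully that the derived $\uK$-power in $\sdkAff$ commutes with $cs_*$ up to weak equivalence, which requires isolating the good behaviour of constant simplicial diagrams under Reedy fibrant replacement and simplicial cotensor.
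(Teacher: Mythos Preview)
Your proposal is correct and arrives at the same conclusion, but the route differs from the paper's in a way worth noting. The paper does not pass through $\pi_0$ and vanishing higher homotopy groups at all. Instead it proves two exact identifications: first that $\csBopRK = cc_*(\BopRK)$ (and likewise for $A$ and for $\partial\Delta^n$), using that the simplicial cotensor in $\sdkAff$ acts only on the outer simplicial direction and therefore commutes on the nose with the constant functor $cc_*$ in the inner direction; second that $cc_*$ commutes with the homotopy fibre product, because $cc_*$ is a right adjoint (hence preserves limits) and the functorial factorisation $E$ satisfies $E\circ cc_* = cc_*\circ E$ levelwise. With these two equalities the map of Lemma~\ref{equforcs} is literally $cc_*$ applied to the classical map, and then the ffqc condition is read off degreewise.

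Your approach replaces these strict identifications by the weaker statement that the relevant objects have $\pi_0$ equal to the classical object and $\pi_i = 0$ for $i>0$, and then invokes the explicit $\pi_*$-formulation of the ffqc condition to descend. This is sound, and your Step~3 is exactly the mechanism the paper implicitly uses in its final line (``by the constant nature of $cc_*$''). The trade-off is that your Step~1 requires controlling a Reedy fibrant replacement up to weak equivalence, which you flag as the main obstacle, whereas the paper's $cc_*$ argument sidesteps this by working with equalities and pushing the homotopy-category passage to a single application of $\gamma$. In practice the paper's argument is shorter, while yours is more robust in that it would survive if the identifications held only up to equivalence rather than on the nose.
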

We start from:
\beq
\xymatrix{
	\csBopRD \ar[d] \ar[r] & \csBopRbD \times^h_{\csAopRbD} \csAopRD \ar[d] \\
\BopRD & \BopRbD \times^h_{\AopRbD} \AopRD
} \nonumber
\eeq
we will show there is a bottom horizontal map making this diagram commutative, which in addition provides a \'et-covering in $\Ho(\kAff)$. First we show that $(cs_*(B_*))^{\op, \bR K} \cong cc_*((B_*)^{\op, \bR K})$ in $\Ho(\sdkAff)$, for $K = \Delta^n$ or $K = \del \Delta^n$, where $cc_*$ denotes the constant cosimplicial functor. Note that this isomorphism still holds if we considered $A$ instead of $B_*$. This allows us to rewrite the top horizontal map above using the constant cosimplicial functor $cc_*$. Then we will argue that:
\beq
\ccBopRbD \times^h _{\ccAopRbD} \ccAopRD \cong cc_* \big( \BopRbD \times^h_{\AopRbD} \AopRD \big) \nonumber
\eeq
and finally using the constant nature of the constant cosimplicial functor $cc_*$ we will show we have a \'et-covering in $\Ho(\kAff)$ as desired. First:
\begin{cscommutes}
	$(cs_*(B_*))^{\op, \bR K} \cong cc_*(B_*^{\op, \bR K})$ in $\Ho(\sdkAff)$ for $K = \Delta^n$ or $K = \del \Delta^n$.
\end{cscommutes}
\begin{proof}
First observe that:
	\begin{align}
		(cs_*(B_*))^{\op, \bR K} &= \{\big(R([cs_*(B_*)]^{\op})\big)^{\uK}\}_0 \nonumber \\
		&= \{\big(R(cc_*(B_*^{\op}))\big)^{\uK}\}_0 \nonumber \\
		&\cong  ((cc_*(RB_*^{\op}))^{\uK})_0 \nonumber 
	\end{align}
	where the isomorphism in $\Ho(\sdkAff)$ is derived as follows: we have a trivial cofibration $B_*^{\op} \rarr RB_*^{\op}$. This equivalence is preserved by the constant cosimplicial functor: $cc_*(B_*^{\op}) \xrarr{\sim} cc_*(RB_*^{\op})$. On the other hand, we also have a trivial cofibration in $\sdkAff$: $cc_*(B_*^{\op}) \xrarr{\sim} Rcc_*(B_*^{\op})$. It follows we have an isomorphism $Rcc_*(B_*^{\op}) \cong cc_*(RB_*^{\op})$ in $\Ho(\sdkAff)$, which gives us the isomorphism above. On the other hand, we have:
	\begin{align}
		cc_*(B_*^{\op, \bR K}) &= cc_*((RB_*^{\op})^K) \nonumber \\
		&= cc_*(((RB_*^{\op})^{\uK})_0) \nonumber
	\end{align}
where the zero-th component is taken in the simplicial model structure of $\skAff$. To emphasize this we can write this last object as $cc_*(((RB_*^{\op})^{\uK})_{0,-})$. Since this zero-th component is taken relative to $B_*$, we can write this as $(cc_*((RB_*^{\op})^{\uK}))_{0,-}$. Thus it suffices to show:
	\beq
	(cc_*(RB_*^{\op}))^{\uK} \cong cc_*((RB_*^{\op})^{\uK}) \nonumber
	\eeq
in $\Ho(\sdkAff)$, where the exponentiation on the left is taken in the simplicial model structure of $\sdkAff$, and the one on the right with respect to the simplicial model structure of $\skAff$. We will show we have an equivalence $(cc_*(RB_*^{\op}))^{\uK} \simeq cc_*((RB_*^{\op})^{\uK})$ in $\sdkAff$. For $F_*$ cofibrant in $\sdkAff$, we have:
\beq
	\Hom_{\sdkAff}(F_*, (cc_*(RB_*^{\op}))^{\uK}) \simeq \Hom(F_{*,*} \tensprod_{\sdkAff} \uK, cc_*(RB_*^{\op})) \nonumber
\eeq
where the first index for $F$ corresponds to the simplicial structure of $\sdkAff$, and the second index corresponds to the cosimplicial structure of $\dkAff = (\skCAlg)^{\op} = cs (\kCAlg)^{\op} = cs \kAff$. Now observe:
	\beq
	F_{*,*} \tensprod_{\sdkAff} \uK = \coprod_{\sigma_n \in K_n} F_{n,*} = \coprod_p \big( \coprod_{\sigma_n} F_{n,p} \big) = \coprod_p F_{*,p} \tensprod_{\skAff} \uK \nonumber
	\eeq
It follows:
	\begin{align}
		\Hom_{\sdkAff}(F_{*,*} \otimes \uK,\, &cc_*(RB_*^{\op})) \nonumber \\
		&= \Hom(\coprod_p F_{*,p} \tensprod_{\skAff} \uK, \coprod_p cc_p(RB_*^{\op})) \nonumber \\
		&= \coprod_p \Hom(F_{*,p} \otimes \uK, RB_*^{\op}) \nonumber \\
		&\simeq \coprod_p \Hom_{\skAff}(F_{*,p}, (RB_*^{\op})^{\uK}) \nonumber \\
		&= \coprod_p \Hom(F_{*,p}, cc_p(RB_*^{\op, \uK})) \nonumber \\
		&= \Hom_{\sdkAff}(F_*, cc_*(RB_*^{\op, \uK})) \nonumber
	\end{align}

\end{proof}
By construction, we formally have the same result if we use $A$ instead of $B_*$. We now show:
\begin{ccXh}
\beq
\ccBopRbD \times^h _{\ccAopRbD} \ccAopRD \cong cc_* \big( \BopRbD \times^h_{\AopRbD} \AopRD \big) \nonumber
\eeq
or in other terms, the functor $cc_*$ preserves homotopy fiber products.
\end{ccXh}
\begin{proof}
	We first focus on the homotopy fiber product on the left above. Following \cite{Hi} it is defined by using a functorial factorization which we will denote by $E$: $A \times^h_C B = E(A) \times_C E(B)$. By definition of the constant cosimplicial functor $cc_*$, a map $cc_*(A) \rarr cc_*(B)$ is given by a map $A \rarr B$ which is the same in all degrees, hence can be denoted by $cc_*(A \rarr B)$. By definition of the functorial factorization, $E(cc_*X) \cong cc_*(EX)$ in $\Ho(cs \cC)$ for $X \in \cC$, and this can be shown in the same fashion that we have shown $cc_*(RB_*^{\op}) \cong Rcc_*(B_*^{\op})$, hence we have a map $cc_*A \larr E(cc_*X) \cong cc_*(EX)$ that can be denoted $cc_*(A \larr E(X))$. It follows:
\begin{align}
& cc_*(\BopRbD) \times^h_{cc_*(\AopRbD)} cc_*(\AopRD)\nonumber \\
& = E(cc_*(\BopRbD)) \times_{cc_*(\AopRbD)} E(cc_*(\AopRD)) \nonumber \\
&=\lim \Big( E(\ccBopRbD) \rarr \ccAopRbD \larr E(\ccAopRD) \Big) \nonumber\\
& \cong \lim \Big( cc_*E(\BopRbD) \rarr \ccAopRbD \larr cc_*E(\AopRD) \Big) \nonumber\\
&=\lim cc_*\big( E(\BopRbD) \rarr \AopRbD \larr E(\AopRD)\big) \nonumber\\
&=cc_*\big( \lim E(\BopRbD) \rarr \AopRbD \larr E(\AopRD)\big) \nonumber
\end{align}
since $cc_*$ preserves limits as a right adjoint, and this equals $cc_*( \BopRbD \times^h _{\AopRbD} \AopRD)$ as claimed.
\end{proof}
We can now complete the proof of lemma \ref{ffqczero}: with the previous results the \'et-covering in $\Ho(\dkAff)$ given by Lemma \ref{equforcs} now reads:
\beq
cc_*(\BopRD) \rarr cc_*(\BopRbD \times^h _{\AopRbD} \AopRD) \label{et1}
\eeq
and again by the constant nature of $cc_*$ this map being a \'et-covering implies:
\beq
\BopRD \rarr \BopRbD \times^h _{\AopRbD} \AopRD \label{et2}
\eeq
is a \'et-covering in $\Ho(\kAff)$, i.e. $A \rarr B_*$ corresponds to a homotopy \'et-hypercovering in $\kAff$.\\

This is not immediate, though fairly clear, so we flesh it out. To say that \eqref{et1} is an \'etale covering means it is of the form $\{ \Spec A_i \rarr \Spec A \}_{i \in I}$, a family for which there is a finite $J \subset I$ such that for all $i$ $\pi_*A \otimes_{\pi_0 A} \pi_0 A_i \rarr \pi_* A_i$ is an isomorphism, and $\coprod_{j \in J} \Spec \pi_0 A_i \rarr \Spec \pi_0 A$ is \'etale and surjective. It turns out here \eqref{et1} is a one element family. To say \eqref{et2} is an \'et-covering in $\Ho(\kAff)$ however means it is of the form $\{ \Spec \uA_i \rarr \Spec \uA \}_{i \in I}$, and we have again a finite subset $J \subset I$ such that $\coprod_{j \in J} \Spec \uA_j \rarr \Spec \uA$ is faithfully flat and \'etale. We now show this holds.\\

We first write \eqref{et1} in the form $\Spec \bfB \rarr \Spec \bfA$, where $\bfA$ and $\bfB$ are algebras. This means 
\begin{align}
	\bfB &= \big( \ccBopRD \big)^{\op} \nonumber \\
	&= cs_*(\BopRD)^{\op} \nonumber
\end{align}
and
\begin{align}
	\bfA &= \big( cc_*(\BopRbD \times^h_{\AopRbD} \AopRD) \big)^{\op} \nonumber \\
	&= cs_*( \BopRbD \times^h_{\AopRbD} \AopRD)^{\op} \nonumber
\end{align}
In a first time $\Spec \bfB \rarr \Spec \bfA$ being an \'et-covering means $\pi_*\bfA \otimes_{ \pi_0 \bfA} \pi_* \bfB \rarr \pi_* \bfB$ is an isomorphism and $\Spec \pi_0 \bfB \rarr \Spec  \pi_0 \bfA$ is \'etale and surjective. This means $\bfA \rarr \bfB$ is strongly \'etale by Definition 1.2.6.1 of \cite{TV4}, which in turns means $\bfA \rarr \bfB$ is \'etale by Thm 2.2.2.6 of \cite{TV4}, which means flat and $\pi_0 \bfA \rarr \pi_0 \bfB$ \'etale by Cor. 2.2.2.11 of \cite{TV4}. Being flat means $\bfB \otimes_{\bfA}^{\bL} - $ is exact. However both $\bfA$ and $\bfB$ are constant simplicial objects, so this means $\bfuB \otimes_{\bfuA}^{\bL} -$ is exact, that is $\bfuA \rarr \bfuB$ is flat, i.e $\Spec \bfuB \rarr \Spec \bfuA$ is flat, where we used the notations:
\begin{align}
	\bfuB &= (\BopRD)^{\op} \nonumber \\
	\bfuA &= (\BopRbD \times^h_{\AopRbD} \AopRD)^{\op} \nonumber
\end{align}
that is $\BopRD \rarr \BopRbD \times^h_{\AopRbD} \AopRD$ is flat. To show that $\Spec \bfuB \rarr \Spec \bfuA$ is \'etale, that is $\BopRD \rarr \BopRbD \times^h_{\AopRbD} \AopRD$ is \'etale, it suffices to observe $\Spec \pi_0 \bfB \rarr \Spec \pi_0 \bfA$ is \'etale. This reads:
\beq
	\Spec \pi_0 cs_*(\BopRD)^{\op} \rarr \Spec \pi_0 cs_*( \BopRbD \times^h_{\AopRbD} \AopRD)^{\op} \nonumber
\eeq
Now $\pi_0 \dashv cs_*$, so this can be rewritten:
\beq
\Spec (\BopRD)^{\op} \rarr \Spec ( \BopRbD \times^h_{\AopRbD} \AopRD)^{\op} \nonumber
\eeq
which simplifies as:
\beq
\BopRD \rarr \BopRbD \times^h_{\AopRbD} \AopRD \nonumber
\eeq
\'etale, and flat so far. For the surjectivity, $\Spec \pi_0 \bfB \rarr \Spec \pi_0 \bfA$ is surjective, and doing the same manipulation as above, this reads $\BopRD \rarr \BopRbD \times^h_{\AopRbD} \AopRD$ is surjective.

\subsubsection{From $\Sh$ being a sheaf to $\dSt$ being a derived stack}
Now that $A \rarr B_*$ corresponds to a \'et-hypercovering in $\kAff$, $\Sh$ being a sheaf we obtain an equivalence in $\SetD$: $\Sh(A) \xrarr{\sim} \holim_{n \in \Delta}\Sh(B_n)$. Now by definition of $s$, we obtain the following commutative diagram:
\beq
\xymatrix{
	cs_* \Sh(A) \ar[d]_{\sim} \ar[r]^-{\sim} & cs_* \holim_{n \in \Delta} \Sh(B_n)  \ar[d]^{\sim}\\
s(\Sh(A)) \ar@{=}[d] \ar[r]_-{\sim} & s( \holim_{n \in \Delta} \Sh(B_n)) \ar@{=}[d] \\
\dSt(sA) & \holim_{n \in \Delta} s(\Sh(B_n)) \ar@{=}[d] \\
&  \holim \; \dSt(sB_n)
}
\eeq
where $\dSt(sA)= s\Sh(A)$ by commutativity of \eqref{CD}, $s (\holim \, \Sh(B_n)) = \holim\, s \Sh(B_n)$ since as shown in \cite{Hi} homotopy limits are equalizers, hence limits, and $s$ preserves finite limits by definition, and finally this last object is equal to $\holim\,  \dSt(sB_n)$ again by commutativity of \eqref{CD}. Further Reedy equivalences being defined levelwise, $\Sh(A) \xrarr{\sim} \holim_{n \in \Delta} \Sh(B_n)$ implies $cs_*\Sh(A) \xrarr{\sim} cs_*\holim_{n \in \Delta} \Sh(B_n)$, hence $s\Sh(A) \rarr s ( \holim\,  \Sh(B_n))$ is an equivalence by the 2 out of 3 property, and it follows that $\dSt(sA) \xrarr{\sim} \holim\,  \dSt(sB_n)$ i.e. $\dSt$ satisfies descent.

\subsection{$\dSt$ preserves finite products}
We consider a finite family $\{sA_i \}$ in $\skCAlg$. We need to show the natural morphism:
\beq
\dSt(\prod sA_i) \rarr \prod \dSt(sA_i) \nonumber
\eeq
is an equivalence in $\SetD$. We have $\dSt(\prod sA_i) = \dSt(s \prod A_i)$ since $s$ preserves finite products. This latter object is equal to $s\Sh(\prod A_i)$ by commutativity of \eqref{CD}. Since $\Sh$ is a sheaf, we have $\Sh(\prod A_i) \xrarr{\sim} \prod \Sh(A_i)$ in $\SetD$. Keeping in mind that we consider sets as constant simplicial sets in $\SetD$, this reads $cs_* \Sh(\prod^h A_i) \xrarr{\sim} cs_* \prod \Sh(A_i)$, so we have the following commutative diagram by virtue of the existence of the natural transformation $j$:
\beq
\xymatrix{
	cs_* \Sh(\prod A_i) \ar[d]_{\sim} \ar[r]^{\sim} & cs_* \prod \Sh(A_i)  \ar[d]^{\sim}\\
s\Sh(\prod A_i) \ar@{=}[d] \ar[r]_{\sim} & s \prod \Sh(A_i) \ar@{=}[d] \\
\dSt(s \prod A_i) \ar@{=}[d]  &\prod s(\Sh(A_i)) \ar@{=}[d]\\
\dSt(\prod sA_i) & \prod \dSt(sA_i)
} \nonumber
\eeq
where we used the 2 out of 3 property to have an equivalence $s(\Sh(\prod A_i)) \rarr s\prod \Sh(A_i)$, hence $\dSt(\prod sA_i) \xrarr{\sim} \prod \dSt(sA_i)$. This completes the proof that $\dSt$ is a derived stack, the claim of Theorem \ref{Thm}.

\end{document}